\def\colon{{:}\;}
\def\R {\Bbb R}
\newcommand{\beq}{\begin{equation}}
\newcommand{\eeq}{\end{equation}}
\newcommand{\ben}{\begin{eqnarray}}
\newcommand{\een}{\end{eqnarray}}
\newcommand{\bet}{\begin{eqnarray*}}
\newcommand{\eet}{\end{eqnarray*}}
\newtheorem{thm}{Theorem}[section]
\newtheorem{pro}[thm]{Proposition}
\newtheorem{cor}[thm]{Corollary}
\newtheorem{de}[thm]{Definition}
\newtheorem{re}[thm]{Remark}
\newtheorem{que}[thm]{Question}
\def\R {\mathbb R}
\def\N {\mathbb N}
\def\M {{\mathcal M}}
\def\F {{\mathcal F}}
\def\I{{\mathcal I}}
\def\C{{\mathbb C}}
\begin{document}
\baselineskip 16pt

\title[Equilibrium states of the pressure function]
{Equilibrium states of the pressure function for products of matrices}

\author{De-Jun Feng}
\address{
Department of Mathematics\\
The Chinese University of Hong Kong\\
Shatin,  Hong Kong\\
P.\ R.\ China
}
\email{djfeng@math.cuhk.edu.hk}

\author{Antti K\"{a}enm\"{a}ki}
\address{
Department of Mathematics and Statistics\\
P.O.\ Box 35 (MaD)\\
FI-40014 University of Jyv\"askyl\"a\\
Finland
}
\email{antti.kaenmaki@jyu.fi}
\date{\today}


\begin{abstract}
Let  $\{M_i\}_{i=1}^\ell$ be a non-trivial family of $d\times d$  complex matrices, in the sense that for any $n\in \N$, there exists $i_1\cdots i_n\in \{1,\ldots, \ell\}^n$ such that $M_{i_1}\cdots M_{i_n}\neq {\bf 0}$.  Let  $P \colon (0,\infty)\to \R$ be the pressure function of  $\{M_i\}_{i=1}^\ell$.   We  show that for each $q>0$, there are at most $d$ ergodic $q$-equilibrium states of $P$, and each of them  satisfies certain Gibbs property.

\end{abstract}

\thanks { }

\maketitle
\setcounter{section}{0}

\section{Introduction and results}
\setcounter{equation}{0}

In this paper, we study the thermodynamic formalism for matrix products. We will  characterize the structure of equilibrium states of pressure functions, and  also examine the Gibbs properties of such states. This work was first carried out in \cite{FeLa02} in the case that the involved matrices are non-negative and satisfy a kind of irreducibility. Some applications were given in the multifractal analysis of the top Lyapunov exponents of matrix products \cite{FeLa02, Fen03a, Fen09} (see also \cite{FeHu09}). In this paper, we will consider arbitrary complex matrices.

Let $(\Sigma,\sigma)$ be the one-sided full shift over the alphabet $\{1,\ldots,\ell\}$ (cf. \cite{Bow75}) and let $\{M_i\}_{i=1}^\ell$ be a family of $d\times d$ complex matrices. For $q>0$, we define
\begin{equation}
\label{e-1.1}
P(q)=\lim_{n\to \infty}\frac{1}{n}\log \sum_{J\in \Sigma_n}\|M_J\|^q,
\end{equation}
where $\Sigma_n$ is the collection of all words of length $n$ over $\{1,\ldots,\ell\}$,
$M_J=M_{j_1}\cdots M_{j_n}$ for $J=j_1\cdots j_n$, and $\|\cdot\|$ is the standard matrix norm. By sub-additivity, the above limit exists and  takes  values in the set $\R\cup\{-\infty\}$. The function $P$ is called the {\it pressure function} of $\{M_i\}_{i=1}^\ell$. It plays an important role in the multifractal analysis of Lyapunov exponents of matrices \cite{FeLa02, Fen03a, Fen09}. Moreover, it is closely related to the dimension theory of  self-affine sets and measures \cite{Fal88, Kae04}.

Denote the collection of all $\sigma$-invariant Borel probability measures on $\Sigma$ by $\M_\sigma(\Sigma)$. Endow $\M_\sigma(\Sigma)$ with the weak-star topology. For $\mu\in \M_\sigma(\Sigma)$, we define
\begin{equation}
\label{e-1.2}
M_*(\mu)=\lim_{n\to \infty}\frac{1}{n}\sum_{J\in \Sigma_n}\mu([J])\log \|M_J\|,
\end{equation}
where $[J]$ denotes the $n$-th cylinder $\{x=(x_i)_{i=1}^\infty\in \Sigma:\; x_1\cdots x_n=J\}$ in $\Sigma$.
The term $M_*(\mu)$ is called the {\it Lyapunov exponent of $\{M_i\}_{i=1}^\ell$ with respect to $\mu$}. It also takes  values in the set $\R\cup\{-\infty\}$.
The following variational principle for $P$ was proved in \cite{CFH08} in a more general sub-additive setting:
\begin{equation}
\label{e-1.3}
P(q)=\sup\{qM_*(\mu)+h(\mu):\; \mu\in \M_\sigma(\Sigma)\},
\end{equation}
where $h(\mu)$ denotes the measure-theoretic entropy of $\mu$ with
respect to $\sigma$ (cf. \cite{Wal-book}). We remark that  \eqref{e-1.3} was  proved earlier  in \cite{Fen04, Kae04} when the matrices are non-negative or  invertible, respectively. For given $q>0$, let
\begin{equation}
\label{e-1.4}
\I_q=\{\mu\in \M_\sigma(\Sigma):\;P(q)=qM_*(\mu)+h(\mu)\}.
\end{equation}
Each element $\mu$ in $\I_q$ is called a {\it $q$-equilibrium state of $P$}. Since both $M_*(\cdot)$ and $h(\cdot)$ are upper semi-continuous on $\M_\sigma(\Sigma)$,   $\I_q$ is a non-empty closed convex subset of $\M_\sigma(\Sigma)$.
In particular, $\I_q$ contains ergodic elements (each extreme point of $\I_q$ is an ergodic measure).

Our main purpose   is to characterize the structure of $\I_q$. This question was partially  raised from \cite{KV09}. A complete characterization is given in Theorem \ref{thm-1.2}. In the following, we shall present the setting and results. Proofs of the results are postponed until \S \ref{sec-2}.

\begin{de}
{\rm
Let ${\Bbb F}$ be $\R$ or $\C$.  A family of $d\times d$ matrices $\{M_i\}_{i=1}^\ell$ with entries in ${\Bbb F}$ is said to be {\it irreducible over ${\Bbb F}^d$} if there is no non-zero proper linear subspace $V$ of ${\Bbb F}^d$ such that
$M_iV\subseteq V$ for all $i \in \{1,\ldots,\ell\}$.
}
\end{de}

The above definition is adopted  from \cite[p.\ 48]{BoLa85}. If $\{M_i\}_{i=1}^\ell$ is irreducible over ${\Bbb F}^d$, then there exist
$D>0$ and $k\in \N$ such that for any words $I,J\in
\Sigma^*=\bigcup_{n=1}^\infty \{1,\ldots,\ell\}^n$, there exists a
word $K$ in $\bigcup_{n=1}^k \{1,\ldots,\ell\}^n$ such that
\begin{equation}
\label{e-2.1} \|M_{IKJ}\|\geq D\|M_I\| \|M_J\|.
\end{equation}
For a proof, see \cite[Proposition 2.8]{Fen09}. This property is crucial
in the proof of the following proposition.

\begin{pro}
\label{pro-1.1} Let ${\Bbb F}$ be $\R$ or $\C$, and $\{M_i\}_{i=1}^\ell$ a family of $d\times d$ matrices with entries in ${\Bbb F}$. If $\{M_i\}_{i=1}^\ell$ is irreducible over ${\Bbb F}^d$, then for each
$q>0$, $P$ has a unique $q$-equilibrium state $\mu_q$. Furthermore,
$\mu_q$ has the following Gibbs property:
\begin{equation}
\label{e-1.5}
C^{-1}\exp(-nP(q))\|M_J\|^q \leq \mu_q([J]) \leq C\exp(-nP(q))\|M_J\|^q
\end{equation}
for all $n\in \N$ and $J\in \Sigma_n$. Moreover, $P$ is
differentiable over $(0,\infty)$ and $P'(q)=M_*(\mu_q)$ for $q>0$.
\end{pro}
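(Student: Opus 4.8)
The plan is to build the equilibrium state explicitly via a sub-additive thermodynamic construction, and then deduce uniqueness and the Gibbs bounds from the irreducibility estimate \eqref{e-2.1}. Fix $q>0$ and write $\Phi_n(J)=\|M_J\|^q$ for $J\in\Sigma_n$; this is a sub-multiplicative potential in the sense that $\Phi_{n+m}(IJ)\le \Phi_n(I)\Phi_m(J)$, and by \eqref{e-1.1}, $\frac1n\log\sum_{J\in\Sigma_n}\Phi_n(J)\to P(q)$. First I would construct a candidate measure $\mu_q$ as a weak-star limit point of the sequence of measures $\nu_n$ that assign to each $n$-cylinder $[J]$ the weight $\Phi_n(J)\big/\sum_{J'\in\Sigma_n}\Phi_n(J')$ (suitably pushed forward under the shift and averaged over the orbit, as in the standard construction of equilibrium states). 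Using sub-multiplicativity and a standard argument (as in \cite{CFH08} or the proof of the variational principle), any such limit point is $\sigma$-invariant and satisfies $qM_*(\mu_q)+h(\mu_q)\ge P(q)$, hence lies in $\I_q$ by \eqref{e-1.3}.

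The heart of the matter is upgrading this to the two-sided Gibbs estimate \eqref{e-1.5}. The upper bound $\mu_q([J])\le C\exp(-nP(q))\|M_J\|^q$ follows from sub-multiplicativity alone: $\sum_{J'\in\Sigma_n}\Phi_n(J')\ge c\,e^{nP(q)}$ (in fact one can normalize so the partition function is comparable to $e^{nP(q)}$ up to a bounded factor, since sub-additivity gives $\sum_{J'\in\Sigma_{n+m}}\Phi(J')\le \big(\sum_{J'\in\Sigma_n}\Phi(J')\big)\big(\sum_{J'\in\Sigma_m}\Phi(J')\big)$). For the lower bound I would invoke \eqref{e-2.1}: given $J\in\Sigma_n$, for every $I\in\Sigma_m$ there is a connecting word $K=K(I,J)$ of length at most $k$ with $\|M_{IKJ}\|\ge D\|M_I\|\,\|M_J\|$, and similarly a word $K'$ of length at most $k$ with $\|M_{JK'I}\|\ge D\|M_J\|\,\|M_I\|$. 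This lets me compare the partition sum over cylinders containing $J$ as a block with the full partition sum: summing $\|M_{IKJ}\|^q$ over $I$, and noting that the map $I\mapsto IKJ$ is at most $\ell^k$-to-one onto its image, gives $\sum_{I'\in\Sigma_{m+|K|+n}, \, I' \text{ ends in } J}\Phi(I')\ge D^q\ell^{-k}\Phi_n(J)\sum_{I\in\Sigma_m}\Phi_m(I)$. Translating these counting inequalities into measure estimates for the limit measure $\mu_q$ — using that $\mu_q$ is, up to bounded multiplicative error, the limit of the normalized partition weights, plus invariance to move the block $J$ to the front — yields $\mu_q([J])\ge C^{-1}\exp(-nP(q))\|M_J\|^q$. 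The main obstacle, and the step requiring the most care, is precisely this transfer: controlling the bounded connecting words uniformly and showing the weak-star limit inherits both inequalities (this is where quasi-multiplicativity of $\Phi$, a consequence of \eqref{e-2.1}, does all the work, and where one typically passes through a sub-additive analogue of the Gibbs construction rather than manipulating the limit directly).

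Granting the Gibbs property, uniqueness is a soft consequence: if $\mu$ is any $q$-equilibrium state, then for $\mu$-a.e.\ $x$ one compares $-\frac1n\log\mu([x_1\cdots x_n])$ with $-\frac1n\log\mu_q([x_1\cdots x_n])$; the latter equals $P(q)-\frac qn\log\|M_{x_1\cdots x_n}\|+o(1)$ by \eqref{e-1.5}, while the Shannon--McMillan--Breiman theorem identifies the former with $h(\mu)$ a.e.\ and Kingman's sub-additive ergodic theorem identifies $\frac1n\log\|M_{x_1\cdots x_n}\|$ with $M_*(\mu)$ a.e.\ (assuming $\mu$ ergodic, which suffices since $\I_q$ is a simplex); the equilibrium condition $qM_*(\mu)+h(\mu)=P(q)$ then forces $\mu$ and $\mu_q$ to be mutually absolutely continuous with bounded density, hence equal by ergodicity. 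Finally, differentiability of $P$ at $q_0$ follows from convexity of $P$ (immediate from \eqref{e-1.3}, being a sup of affine functions of $q$) together with uniqueness of the equilibrium state: every one-sided derivative of $P$ at $q_0$ is realized as $M_*(\mu)$ for some $q_0$-equilibrium state $\mu$ (a standard argument taking limits of equilibrium states for $q\to q_0^\pm$ and using upper semi-continuity of $M_*$ and $h$), and since there is only one such $\mu=\mu_{q_0}$, the left and right derivatives coincide and equal $M_*(\mu_{q_0})$.
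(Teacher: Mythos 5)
Your overall route coincides with the paper's: construct $\mu_q$ as a limit point of the shift-averaged normalized partition measures, get the Gibbs property \eqref{e-1.5} from the quasi-multiplicativity estimate \eqref{e-2.1} (the paper delegates this to the argument of Feng--Lau), deduce $\mu_q\in\I_q$ from \eqref{e-1.5} and \eqref{e-1.3}, and obtain $P'(q)=M_*(\mu_q)$ from convexity, uniqueness, and the realization of the one-sided derivatives as Lyapunov exponents of equilibrium states. One minor correction on the Gibbs upper bound: it does not follow ``from sub-multiplicativity alone.'' Sub-additivity of $\log Z_n$, where $Z_n=\sum_{J\in\Sigma_n}\|M_J\|^q$, gives $Z_n\ge e^{nP(q)}$ but \emph{not} $Z_n\le Ce^{nP(q)}$; to bound $\mu_q([J])$ for the limit measure you must bound ratios such as $Z_{m-n}/Z_m$ from above, which needs the super-multiplicativity $Z_m\ge cZ_nZ_{m-n}$ --- again a consequence of \eqref{e-2.1}. (Indeed, without irreducibility the upper Gibbs bound can genuinely fail, which is the whole point of Theorem \ref{thm-1.2}.)

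The genuine gap is in the uniqueness step. From Shannon--McMillan--Breiman, Kingman's theorem and \eqref{e-1.5} you correctly get, for an ergodic $\mu\in\I_q$ with $qM_*(\mu)+h(\mu)=P(q)$, that $\tfrac1n\log\bigl(\mu([x_1\cdots x_n])/\mu_q([x_1\cdots x_n])\bigr)\to 0$ for $\mu$-a.e.\ $x$. But this sub-exponential agreement of cylinder masses does \emph{not} ``force $\mu$ and $\mu_q$ to be mutually absolutely continuous with bounded density'': distinct ergodic measures are automatically mutually singular, and mutual singularity is perfectly compatible with the ratio of cylinder measures being $e^{o(n)}$ along $\mu$-typical points (the two statements live at different scales). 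The correct argument --- Bowen's proof of \cite[Theorem 1.22]{Bow75}, or \cite[Theorem 3.6]{KV09}, which is what the paper invokes --- is a counting/convexity argument rather than a pointwise comparison: assuming $\mu\perp\mu_q$, choose unions $A_n$ of $n$-cylinders with $\mu(A_n)\to1$ and $\mu_q(A_n)\to0$, apply the inequality $\sum_i p_i(a_i-\log p_i)\le\bigl(\sum_i p_i\bigr)\bigl(\log\sum_i e^{a_i}-\log\sum_i p_i\bigr)$ separately to the cylinders inside and outside $A_n$, and use the Gibbs bounds to conclude $qM_*(\mu)+h(\mu)<P(q)$, a contradiction. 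Your sketch replaces this with an inference that is false as stated, so the uniqueness step needs to be redone along these lines; everything downstream (differentiability) is fine once uniqueness is secured.
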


\begin{re}
{\rm     Proposition \ref{pro-1.1} is an analogue of Bowen's theory about the equilibrium state of H\"{o}lder continuous additive potentials (cf.\ \cite{Bow75}). See
\cite{Rue78, Wal-book} for backgrounds and  more details about the classical thermodynamic formalism of additive potentials.  Proposition \ref{pro-1.1}  was first proved in \cite{FeLa02} for
non-negative matrices under a different irreducibility assumption (that is, there exists $r\in \N$ so that $\sum_{i=1}^r(M_1+\cdots+ M_\ell)^r$ is a strictly positive matrix). An extension was recently given in \cite[Theorem 5.5]{Fen09b} to certain sub-additive potentials.
}
\end{re}

Let us next consider the non-irreducibility case. Denote
the $n\times m$ zero matrix by ${\bf 0}_{n\times m}$.

\begin{pro}
\label{pro-0} Let ${\Bbb F}$ be $\R$ or $\C$, and $\{M_i\}_{i=1}^\ell$ a family of $d\times d$ matrices with entries in ${\Bbb F}$. Then there exist an invertible $d\times d$ matrix $T$,
$t \in \{1,\ldots,d\}$,
and positive integers $d_1,\ldots,d_t$ with $d=d_1+\cdots +d_t$ such that
for every $i \in \{ 1,\ldots,\ell \}$ the product
$T^{-1}M_iT$ is a partitioned matrix of the
form
\begin{equation}
\label{e-1.6} T^{-1}M_iT=\left( A^{(j,k)}_i\right)_{1 \leq j,k \leq t},
\end{equation}
where $A^{(j,k)}_i$, $j,k \in \{ 1,\ldots,t \}$, satisfy the following two properties:
\begin{itemize}
\item[(i)]  $A^{(j,k)}_i$ is a $d_j \times d_k$ matrix and $A^{(j,k)}_i={\bf 0}_{d_j\times d_k}$ when $j>k$.
\item[(ii)] For any $j \in \{ 1,\ldots,t \}$, either the family $\{A^{(j,j)}_i\}_{i=1}^\ell$ is irreducible over ${\Bbb F}^{d_j}$, or
$A^{(j,j)}_i={\bf 0}_{d_j\times d_j}$ for all $i \in \{1,\ldots,\ell\}$.
\end{itemize}
\end{pro}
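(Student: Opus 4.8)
The plan is to prove this by induction on the dimension $d$, constructing the block-triangular form one step at a time by peeling off an invariant subspace. If the family $\{M_i\}_{i=1}^\ell$ is already irreducible over ${\Bbb F}^d$, we take $T$ to be the identity, $t=1$, $d_1=d$, and we are done. If instead $A_i := M_i$ all vanish, we are again done with $t=1$. So we may assume the family is reducible and not all $M_i$ are zero; pick a non-zero proper common invariant subspace $V \subseteq {\Bbb F}^d$, and (to set up a clean induction) we may take $V$ to be \emph{minimal} among non-zero common invariant subspaces, so that the restricted family $\{M_i|_V\}_{i=1}^\ell$ is either irreducible over $V$ or identically zero on $V$.

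Next I would choose a basis adapted to the flag $\{0\} \subseteq V \subseteq {\Bbb F}^d$: let $e_1,\ldots,e_{d_1}$ be a basis of $V$ (with $d_1 = \dim V$) and extend to a basis $e_1,\ldots,e_d$ of ${\Bbb F}^d$. Let $S$ be the change-of-basis matrix sending the standard basis to this basis; then $S^{-1}M_iS$ has block form $\begin{pmatrix} A^{(1,1)}_i & A^{(1,2)}_i \\ {\bf 0} & B_i \end{pmatrix}$ where $A^{(1,1)}_i$ is the $d_1 \times d_1$ matrix representing $M_i|_V$, which by the choice of $V$ is either irreducible over ${\Bbb F}^{d_1}$ or zero, and $B_i$ is the $(d-d_1)\times(d-d_1)$ matrix representing the induced action on the quotient ${\Bbb F}^d / V$. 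Now apply the induction hypothesis to the family $\{B_i\}_{i=1}^\ell$ of $(d-d_1)\times(d-d_1)$ matrices: there is an invertible $R$ and a partition $d-d_1 = d_2 + \cdots + d_t$ so that each $R^{-1}B_iR$ is block-upper-triangular with diagonal blocks that are each irreducible or zero. Conjugating the lower-right block of $S^{-1}M_iS$ by $R$ — i.e. replacing $S$ by $S\cdot\mathrm{diag}(I_{d_1}, R)$ — produces the required partitioned form for $\{M_i\}$ with $T = S\cdot\mathrm{diag}(I_{d_1},R)$; the strict block-lower-triangularity (property (i)) is preserved under this conjugation since it only mixes the last $d-d_1$ coordinates, and property (ii) holds for the first diagonal block by construction and for the remaining diagonal blocks by the induction hypothesis.

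The routine verifications to include are: that conjugating by a block-diagonal matrix $\mathrm{diag}(I_{d_1}, R)$ keeps the $(1,1)$ block, the vanishing of the strictly-lower blocks, and the irreducibility-or-zero property of the $(1,1)$ block all intact; and that "irreducible over ${\Bbb F}^{d_k}$" is exactly the condition that the family $\{A^{(k,k)}_i\}_i$ has no non-zero proper common invariant subspace, matching the recursive statement. The base case $d=1$ is immediate: a family of $1\times 1$ matrices is either irreducible over ${\Bbb F}^1$ (if some $M_i \neq 0$) or all zero, so $t=1$, $d_1=1$.

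The main obstacle — really the only subtlety — is bookkeeping the existence of a common invariant subspace on which the restricted family is irreducible-or-zero, which is what makes the induction close cleanly; taking $V$ minimal handles this, but one should note that a minimal non-zero common invariant subspace always exists by finite-dimensionality (any chain of common invariant subspaces has minimal elements), and that if $V$ is minimal then either $\{M_i|_V\}$ is irreducible over $V$ (no smaller non-zero invariant subspace) or $V$ is spanned by a common eigenvector with eigenvalue $0$ for every $M_i$, forcing all $A^{(1,1)}_i = {\bf 0}$ only in the $d_1=1$ case — more carefully, one should instead argue that \emph{if} $\{M_i|_V\}$ is reducible then it has a non-zero proper invariant subspace $V'\subsetneq V$, contradicting minimality \emph{unless} every $M_i|_V$ is the zero map (in which case $V$ itself is a trivial example of a "zero family" regardless of dimension). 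This last dichotomy is the one place to be slightly careful, but it is elementary.
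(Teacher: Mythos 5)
Your proof is correct and follows essentially the same strategy as the paper's: induction on the dimension $d$, peeling off a common invariant subspace to obtain a $2\times 2$ block upper-triangular form and then recursing. The only (immaterial) difference is that the paper takes an arbitrary non-zero proper invariant subspace $V$ and applies the induction hypothesis to \emph{both} diagonal blocks, whereas you take $V$ minimal so that the first diagonal block is already irreducible-or-zero and recurse only on the quotient block.
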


Considering the partition (\ref{e-1.6}) in the above proposition, we
set
$$\Lambda=\Lambda(\{M_i\}_{i=1}^\ell)=\{j\in \{1,\ldots, t\}:\; \{A_i^{(j,j)}\}_{i=1}^\ell
\mbox{ is irreducible over ${\Bbb F}^{d_j}$}\}.
$$

\begin{re} {\rm
It is possible that $\Lambda=\emptyset$. For instance, this is the
case for $\{M_i\}_{i=1}^2$, where
$$
M_1=\left(\begin{array}{ll} 0 & 1\\
0&0
\end{array}
\right), \qquad M_2=\left(\begin{array}{ll} 0 & 2\\
0&0\end{array} \right).
$$
Anyhow, it holds that $\Lambda=\emptyset$ if and only if there is
$k\in \N$ such that $M_{i_1}\cdots M_{i_n}={\bf 0}_{d\times d}$ for
all $n\geq k$ and $i_1\cdots i_n\in \{1,\ldots, \ell\}^n$.
Observe first that $T^{-1}M_{i_1} \cdots M_{i_n}T$ is a partitioned
matrix of the form $(B^{(j,k)})_{1\le j,k \le t}$, where
\begin{equation} \label{e-2.7_1}
\begin{split}
  B^{(j,k)}&=\sum_{1\leq y_1,\ldots,y_{n-1}\leq t} A_{i_1}^{(j,y_1)}A_{i_2}^{(y_1,y_2)}\cdots
  A_{i_n}^{(y_{n-1},k)}\\
  &=\sum_{j\leq y_1\leq y_2\leq \cdots\leq y_{n-1}\leq k} A_{i_1}^{(j,y_1)}A_{i_2}^{(y_1,y_2)}\cdots
  A_{i_n}^{(y_{n-1},k)}
\end{split}
\end{equation}
is a $d_j \times d_k$ matrix.
According to (ii) of Proposition \ref{pro-0}, $\Lambda = \emptyset$
implies $A^{(j,j)}_i = {\bf 0}_{d_j \times d_j}$
for all $i \in \{ 1,\ldots,\ell \}$ and $j \in \{ 1,\ldots,t \}$.
Hence $M_{i_1} \cdots M_{i_n} = {\bf 0}_{d \times d}$ for all $n>t$
by \eqref{e-2.7_1}.
To see the converse, assume contrarily that $\{ A^{(j,j)}_i \}_{i=1}^\ell$
is irreducible over ${\Bbb F}^{d_j}$ for some $j \in \{ 1,\ldots,t \}$.
It follows now from \eqref{e-2.1} that for every $n \in \N$ there exists
a word $i_1 \cdots i_n$ such that $A^{(j,j)}_{i_1} \cdots A^{(j,j)}_{i_n}
\ne {\bf 0}_{d_j \times d_j}$ and, consequently, $M_{i_1}\cdots M_{i_n}
\ne {\bf 0}_{d\times d}$.
}
\end{re}

\begin{de}

{\rm A family of $d\times d$ complex matrices $\{M_i\}_{i=1}^\ell$ is called {\it non-trivial} if $\Lambda\neq \emptyset$, or equivalently,  for each $n\in \N$, there exists
$I\in \{1,\ldots, \ell\}^n$ such that $M_I\neq {\bf 0}_{d\times d}$.}
\end{de}

In the following, we always assume that $\{M_i\}_{i=1}^\ell$ is non-trivial.   If $j\in \Lambda$, then the pressure
function of $\{A^{(j,j)}_i\}_{i=1}^\ell$ is denoted by $P_j$ and
the Lyapunov exponent of $\{A^{(j,j)}_i\}_{i=1}^\ell$ with respect
to $\mu$ is denoted by $A^{(j)}_{*}(\mu)$. The following is
the main result of our paper.

\begin{thm}
\label{thm-1.2}
In the above general setting, it holds that
\begin{itemize}
\item[(i)] $M_*(\mu)=\max\{A_{*}^{(j)}(\mu):\; j\in \Lambda\}$ for each ergodic measure $\mu\in \M_\sigma(\Sigma)$.
\item[(ii)]
 $P$
is a real-valued convex function on $(0,\infty)$, and
$P(q)=\max\{P_j(q):\; j\in \Lambda\}$ for all $q>0$.
\item[(iii)]
if $q>0$ and $\mu_{j,q}$, $j\in \Lambda$, is the unique
$q$-equilibrium state for $P_j$, then
\begin{equation*}
\I_q=\mbox{\rm conv}\{\mu_{j,q}:\; P_j(q)=P(q)\},
\end{equation*}
where $\mbox{\rm conv}(A)$ is the convex hull of $A$.
\end{itemize}
\end{thm}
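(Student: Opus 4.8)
The plan is to reduce everything to the irreducible case handled by Proposition~\ref{pro-1.1}, using the block upper-triangular form of Proposition~\ref{pro-0}. The key estimate is the two-sided comparison between $\|M_J\|$ and the blocks $\|A^{(j,j)}_J\|$ for $j \in \Lambda$. Since conjugation by the fixed invertible matrix $T$ changes norms only by a bounded factor, we may assume $M_i = T^{-1}M_iT$ is already in the form \eqref{e-1.6}. For the lower bound, using \eqref{e-2.7_1} with $j = k$ we see that the diagonal block of $M_J$ is exactly $A^{(j,j)}_{j_1}\cdots A^{(j,j)}_{j_n}$, whence $\|M_J\| \geq c\,\|A^{(j,j)}_J\|$ for each $j$, so $\|M_J\| \geq c\max_{j\in\Lambda}\|A^{(j,j)}_J\|$. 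For the upper bound I would show that $\|M_J\|$ is, up to a polynomial factor in $n$ (coming from the number of chains $j \le y_1 \le \cdots \le y_{n-1} \le k$ in \eqref{e-2.7_1}), dominated by products of the diagonal blocks along such chains; the blocks with $A^{(j,j)}_i \equiv \mathbf{0}$ contribute nothing, so only indices in $\Lambda$ matter. Taking $\frac1n\log$ and letting $n\to\infty$ kills the polynomial factor, giving part~(ii): $P(q) = \max_{j\in\Lambda}P_j(q)$, and each $P_j$ is real-valued by Proposition~\ref{pro-1.1}, so $P$ is a finite convex function.

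For part~(i), I would run the same two-sided comparison at the level of a fixed ergodic $\mu$. The lower bound $M_*(\mu) \geq A^{(j)}_*(\mu)$ for each $j\in\Lambda$ is immediate from $\|M_J\|\geq c\|A^{(j,j)}_J\|$ and the definition \eqref{e-1.2}. For the reverse inequality one needs that $\frac1n\sum_{J\in\Sigma_n}\mu([J])\log\|M_J\|$ is asymptotically bounded by $\max_{j\in\Lambda}\frac1n\sum_J\mu([J])\log\|A^{(j,j)}_J\|$; here the subtlety is that the $\max$ is inside the sum over $J$, i.e. the "winning" block index can depend on $J$. I would split $\Sigma_n$ according to which chain in \eqref{e-2.7_1} realizes (up to the polynomial factor) the norm, and use concavity/superadditivity of $\mu\mapsto M_*(\mu)$ together with the fact that on an ergodic measure the Kingman limit is a genuine a.e. limit, so that $\log\|M_J\| \approx nM_*(\mu)$ for $\mu$-typical $J$ and likewise $\log\|A^{(j,j)}_J\| \approx nA^{(j)}_*(\mu)$; comparing the dominant chain then forces $M_*(\mu) = \max_{j\in\Lambda}A^{(j)}_*(\mu)$. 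This step — transferring the blockwise norm comparison from a pointwise statement for words to an identity for Lyapunov exponents of a general ergodic measure — is the main obstacle, and is presumably where the ergodicity hypothesis in~(i) is essential (for non-ergodic $\mu$ one would only get $M_*(\mu)\geq\max$, not equality, since different ergodic components can be dominated by different blocks).

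Given (i) and (ii), part~(iii) is mostly bookkeeping with the variational principle \eqref{e-1.3}. First, the inclusion $\supseteq$: if $P_j(q) = P(q)$ and $\mu_{j,q}$ is the $q$-equilibrium state of $P_j$, then by~(i) applied to the ergodic measure $\mu_{j,q}$ we get $M_*(\mu_{j,q}) \geq A^{(j)}_*(\mu_{j,q})$, hence $qM_*(\mu_{j,q}) + h(\mu_{j,q}) \geq qA^{(j)}_*(\mu_{j,q}) + h(\mu_{j,q}) = P_j(q) = P(q)$, so equality holds throughout and $\mu_{j,q}\in\I_q$; since $\I_q$ is convex, $\mathrm{conv}\{\mu_{j,q} : P_j(q)=P(q)\}\subseteq\I_q$. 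For the reverse inclusion, take an extreme point $\mu$ of $\I_q$, which is ergodic; by~(i), $M_*(\mu) = A^{(j)}_*(\mu)$ for some $j\in\Lambda$, and then $P(q) = qM_*(\mu)+h(\mu) = qA^{(j)}_*(\mu)+h(\mu) \leq P_j(q) \leq P(q)$ by the variational principle for $P_j$ and part~(ii), forcing $P_j(q)=P(q)$ and $\mu\in\I_q(P_j) = \{\mu_{j,q}\}$ by the uniqueness in Proposition~\ref{pro-1.1}. Thus every extreme point of $\I_q$ is one of the $\mu_{j,q}$ with $P_j(q)=P(q)$, and since $\I_q$ is a compact convex set it is the closed convex hull of its extreme points; as there are finitely many candidate extreme points, the closed convex hull is just their convex hull, giving $\I_q = \mathrm{conv}\{\mu_{j,q} : P_j(q)=P(q)\}$. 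The bound $|\Lambda|\leq \#\{j : d_j\geq 1\} = t \leq d$ then yields the claimed bound of at most $d$ ergodic $q$-equilibrium states, and the Gibbs property of each $\mu_{j,q}$ is inherited from \eqref{e-1.5} together with the norm comparison $\|M_J\|\asymp_{\mathrm{poly}}\|A^{(j,j)}_J\|$ on the relevant words.
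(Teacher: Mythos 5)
Your overall architecture matches the paper's: reduce to the block upper-triangular form of Proposition \ref{pro-0}, prove the two-sided comparison in (i), and then get (ii) and (iii) by bookkeeping with the variational principle \eqref{e-1.3} and Proposition \ref{pro-1.1}. Your part (iii) is correct and is essentially what the paper does (the paper states that (ii) and (iii) follow from (i)); your direct summation argument for (ii) could also be made to work, though the paper instead deduces (ii) from (i) by evaluating the variational principle at ergodic measures.

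The genuine gap is in the hard direction of (i), precisely at the point you flag as ``the main obstacle'' and then wave away. Expanding $T^{-1}M_{x_1\cdots x_n}T$ via \eqref{e-2.7_1}, each chain term factors as $W_1 A^{(a_1,a_2)}_{\cdot} W_2 \cdots W_s$, where $W_i$ is a product of diagonal blocks over the \emph{interior} subword $x_{m+1}\cdots x_{m+n_i}$ with $m=n_1+\cdots+n_{i-1}$. Kingman's theorem for the ergodic measure $\mu$ only gives $\log\|A^{(j,j)}_{x_1\cdots x_n}\|\approx nA^{(j)}_*(\mu)$ for \emph{initial} segments of a typical $x$; it says nothing uniform about $\|A^{(j,j)}_{x_{m+1}\cdots x_{m+n_i}}\|$ as $m$ and $n_i$ range over all admissible splittings of a single word. ($T^m x$ is typical for each fixed $m$, but you need the estimate simultaneously for all $m\le n$ with an error that is $o(n)$, and the exceptional sets a priori accumulate.) The paper devotes its only technical lemma, Proposition \ref{lem-2.1} and Corollary \ref{cor-2.1}, to exactly this: for a.e.\ $x$ one has $\log f_{n}(T^m x)\le n\max\{\alpha,-N\}+(n+m)\epsilon$ for all $n,m$, proved by combining the Kingman lower bound with the almost-sub-multiplicative splitting $(f_n)^k\le C^{2k^2}\prod_j f_k(T^jx)$ and the Birkhoff theorem. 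Without this uniform estimate your ``comparing the dominant chain'' step does not close. A second, smaller omission: for $j\notin\Lambda$ (all diagonal blocks zero) and even for some $j\in\Lambda$ one may have $A^{(j)}_*(\mu)=-\infty$, so the bound on $\|W_i\|$ must be run with the truncation $\max\{A^{(j)}_*(\mu),-N\}$ and $N\to\infty$ at the end; your sketch implicitly assumes all these exponents are finite.
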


\begin{re}
\label{re-a} {\rm
The equality in (i) of Theorem \ref{thm-1.2} may fail for
non-ergodic measures of $\M_\sigma(\Sigma)$.
For instance, consider $\{M_i\}_{i=1}^2$, where $M_1={\rm
diag}(1,2)$ and $M_2={\rm diag}(3,2)$. Let $\mu_1=\delta_{1^\infty}$,
$\mu_2=\delta_{2^\infty}$ (here $\delta_x$ denotes the Dirac measure at $x$), and $\mu=p\mu_1+(1-p)\mu_2$ for some
$0<p<1$. It is easy to check that
$$
M_*(\mu_1)=\log 2,\quad A_*^{(1)}(\mu_1)=0,\quad A_*^{(2)}(\mu_1)=\log 2
$$
and
$$
M_*(\mu_2)=\log 3,\quad A_*^{(1)}(\mu_2)=\log 3,\quad A_*^{(2)}(\mu_2)=\log 2.
$$
Since $M_*(\cdot)$, $A^{(1)}_*(\cdot)$, and $A^{(2)}_*(\cdot)$ are
affine on $\M_\sigma(\Sigma)$, we have
$$M_*(\mu)=p\log 2+(1-p)\log
3, \quad A^{(1)}_*(\mu)=(1-p)\log 3,\quad A^{(2)}_*(\mu)=\log 2,$$
and thus, $M_*(\mu)> \max\{A_{*}^{(i)}(\mu):\; i\in \{1,2\}\}$.
}

\end{re}

\begin{re} {\rm
The pressure function for products of matrices has been studied in the literature under some stronger conditions.
Let $\{M_i\}_{i=1}^\ell$ be a family of real invertible matrices. Assume that $\{M_i\}_{i=1}^\ell$ satisfies  the strong irreducibility and contraction conditions (cf.\ \cite{BoLa85, GuLe04}).  Guivarc'h and Le Page showed in
\cite[Theorem 8.8]{GuLe04} that the pressure function $P$ of $\{M_i\}_{i=1}^\ell$ corresponds to the logarithm  of the spectral radius of certain Ruelle transfer operator and moreover, $P$ is real analytic on $(0,\infty)$, and it can be extended to an analytic function on $\{z\in \C:\; \Re z>0\}$.
This strengthens an early result of Le Page \cite{LeP82}.
}

\end{re}

\section{Proofs of the results} \label{sec-2}
\setcounter{equation}{0}

This section is dedicated to the proof of Theorem \ref{thm-1.2}. For the convenience of the reader we shall also present complete proofs for Propositions \ref{pro-1.1} and \ref{pro-0}.

\begin{proof}[Proof of Proposition  \ref{pro-1.1}]
Let $q>0$. Define a sequence of probability measures
$(\nu_{n,q})_{n\geq 1}$ on $\Sigma$ so that
$$
\nu_{n,q}([I])=\frac{\|M_I\|^q}{\sum_{J\in \Sigma_n}
\|M_J\|^q}
$$
for all $I \in \Sigma_n$.
Let $\nu_q$ be a limit point of the sequence $(\nu_{n,q})_{n \geq 1}$
in the weak topology. Furthermore, let $\mu_q$ be a limit point of
the sequence
$$\left( \frac{1}{n}\sum_{j=0}^{n-1} \nu_q\circ \sigma^{-j} \right)_{n \geq 1}$$ in the weak topology. Using (\ref{e-2.1}) and a
proof essentially identical to that of \cite[Theorem 3.2]{FeLa02},
we see that  $\mu_q \in \M_\sigma(\Sigma)$ is ergodic and has the Gibbs
property (\ref{e-1.5}). Thus
\begin{equation*}
\begin{split}
  qM_*(\mu_q) + h(\mu_q) &\ge \lim_{n \to \infty}\frac{1}{n} \sum_{J \in \Sigma_n} \mu_q([J])\log\bigl( C^{-1}\exp(nP(q))\mu_q([J]) \bigr) \\ &\qquad- \lim_{n \to \infty}\frac{1}{n} \sum_{J \in \Sigma_n} \mu_q([J])\log\mu_q([J]) = P(q).
\end{split}
\end{equation*}
Recalling \eqref{e-1.3}, this implies $\mu_q \in \I_q$.

Applying (\ref{e-1.5}) and the ergodicity of $\mu_q$, and using an identical argument as in \cite[proof of Theorem 1.22]{Bow75} (or using \cite[Theorem 3.6]{KV09}), we see that $\mu_q$ is the
unique element in $\I_q$. According to this uniqueness, we have
$P'(q)=M_*(\mu_q)$, which  follows from  the Ruelle-type derivative
formula of pressures obtained in \cite[Theorem 1.2]{Fen04}:
$$
P'(q-)=\inf\{\M_*(\mu): \; \mu\in \I_q\},\quad
P'(q+)=\sup\{\M_*(\mu): \; \mu\in \I_q\}.
$$
We remark that  although \cite[Theorem 1.2]{Fen04} only deals with non-negative matrices, the proof given there works for arbitrary matrices. Alternatively,
to show that $P'(q) = M_*(\mu_q)$, we may apply \eqref{e-1.5} and the ergodicity of
$\mu_q$, and follow \cite[proof of Theorem 2.1]{Heu98} (see also \cite[Theorem 4.4]{KV09}).
\end{proof}

\begin{proof}[Proof of Proposition \ref{pro-0}]
We prove the proposition by induction on $d$. Clearly the
proposition  is true when $d=1$. Assuming there exists an integer
$p$ so that the proposition is true for all $d\leq p$, we show below
that it remains true for $d=p+1$. Let $L(n,m)$ be the collection of
all $n\times m$ matrices with entries in ${\Bbb F}$.

If $\{M_i\}_{i=1}^\ell$ is irreducible over ${\Bbb F}^d$, we
simply take $t=1$ and have nothing else to prove.  We may thus
assume that $\{M_i\}_{i=1}^\ell$ is reducible, that is,
there exists a non-zero proper linear space $V$ of ${\Bbb F}^d$ such that
$M_iV\subset V$. If we let $v=\dim V$, then $1\leq v$ and $d-v\leq d-1=p$.
We choose an invertible linear map $T_1:\; {\Bbb F}^d\to {\Bbb F}^d$ such that
$T_1({\Bbb F}^v\times \{ 0\})=V$. Then for each $i\in\{1,\ldots,\ell\}$
there exist $E_i\in L(v,v)$, $B_i\in L(v,  d-v)$, $D_i\in L(d-v, d-v)$
so that
$$
T_1^{-1}M_iT_1=\left(\begin{array}{ll}
 E_i  & B_i\\
{\bf 0}_{(d-v)\times v} & D_i
\end{array}
 \right).
$$
Now by the induction assumption, there exist invertible
matrices $T_2\in L(v,v)$ and $T_3\in L(d-v, d-v)$  such that
$(T_2^{-1}E_iT_2)_{i=1}^\ell$ and $(T_3^{-1}D_iT_3)_{i=1}^\ell$ have
the desired partitioned form for all $i \in \{ 1,\ldots,\ell \}$. It
follows that
$$
T_4=T_1\left(
\begin{array}{ll} T_2 & {\bf 0}_{v\times (d-v)}\\
{\bf 0}_{(d-v)\times v} & T_3
\end{array}\right)
$$
is an invertible $d \times d$ matrix and
$$T_4^{-1}M_iT_4= \left(
\begin{array}{ll}
T_2^{-1}E_iT_2 & T_2^{-1} B_i T_3\\
{\bf 0}_{(d-v)\times v} & T_3^{-1}D_iT_3
\end{array}
\right)
$$
has the desired partitioned form for all $i \in \{ 1,\ldots,\ell \}$.
\end{proof}

Before proving Theorem \ref{thm-1.2}, we shall first prove the following auxiliary result.

\begin{pro}
\label{lem-2.1} Let $(X,\F, \mu)$ be a probability space and  $T:\;X\to X$ an ergodic measure-preserving  transformation.
Let $\{f_n\}_{n=1}^\infty$ be a sequence of non-negative Borel measurable functions on $X$ such that $\sup_{x\in X}f_1(x)<\infty$ and
\begin{equation}
\label{e-pro}
f_{n+m}(x)\leq f_m(x)f_n(T^m x)
\end{equation}
for all $m,n\in \N$ and $x\in X$. If
$\epsilon>0$ and  $\alpha=\lim_{n\to \infty}({1}/{n})\int \log f_n \; d\mu$, then the following claims hold:
\begin{itemize}
\item[(i)] If $\alpha\neq -\infty$, then for $\mu$-almost every $x\in X$, there exists a positive integer $n_0(x)$ such that
\begin{equation}
\label{e-2.2} |\log f_n(T^m x)-n\alpha|\leq (n+m)\epsilon
\end{equation}
for all $n\geq n_0(x)$ and $m\in \N$.
\item[(ii)]
If $\alpha=-\infty$, then for any $N>0$ and  $\mu$-almost every $x\in X$, there exists a positive integer $n_0(x)$ such that
\begin{equation}
\label{e-2.5}
\log f_n(T^m x)\leq -Nn+ (n+m)\epsilon
\end{equation}
for all $n\geq n_0(x)$ and $m\in \N$.
\end{itemize}
\end{pro}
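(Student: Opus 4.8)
The plan is to derive both parts from Kingman's subadditive ergodic theorem, supplemented by Egorov's theorem and a block–decomposition (``tiling'') estimate. Write $F_n:=\log f_n$, a measurable function with values in $[-\infty,\infty)$, and $C_1:=\sup_{x\in X}f_1(x)<\infty$; iterating \eqref{e-pro} gives $F_n\le n\log C_1$ for all $n$. Integrating \eqref{e-pro} and using the $T$–invariance of $\mu$ shows that $n\mapsto\int F_n\,d\mu$ is subadditive (all integrals being well defined in $[-\infty,\infty)$ because $F_n\le n\log C_1$), so $\alpha$ exists in $[-\infty,\infty)$ and, by Kingman's theorem and ergodicity, $\frac1nF_n(x)\to\alpha$ for $\mu$-a.e.\ $x$; applying $T^j$ for each fixed $j$ one also gets $\frac1nF_n(T^jx)\to\alpha$ for $\mu$-a.e.\ $x$.

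The lower bound in (i) needs only submultiplicativity: \eqref{e-pro} gives $F_n(T^mx)\ge F_{m+n}(x)-F_m(x)$, and inserting the Kingman estimates $F_{m+n}(x)\ge(m+n)(\alpha-\frac{\epsilon}{2})$ (valid for every $m$ once $n\ge n_0(x)$) and $F_m(x)\le m(\alpha+\frac{\epsilon}{2})$ for $m$ past the convergence threshold of $x$ --- together with the crude bound $F_m(x)\le m\log C_1$ over the remaining finite range of $m$ --- an elementary computation yields $F_n(T^mx)\ge n\alpha-(n+m)\epsilon$ for all $n\ge n_0(x)$ and all $m\in\N$.

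The upper bounds in (i) and (ii) are the heart of the matter, and the difficulty is structural: \eqref{e-pro} bounds the shifted quantity $F_n(T^mx)$ only from \emph{below}, so any upper bound must come from the genericity of the point $T^mx$, and the trouble is that the time after which $F_k(y)/k$ stays below a prescribed level is not uniform over the points $y=T^mx$ of a single orbit. To handle this, fix a level $\theta$ with $\alpha<\theta$ lying strictly below the target slope (take $\theta=\alpha+\frac{\epsilon}{2}$ if $\alpha\in\R$ and $\theta=-N$ if $\alpha=-\infty$). Egorov's theorem applied to $\frac1nF_n\to\alpha$ provides, for any prescribed $\delta>0$, a set $E$ with $\mu(X\setminus E)<\delta$ and an integer $K$ such that $F_k(y)\le\theta k$ whenever $y\in E$ and $k\ge K$. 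For $\mu$-a.e.\ $x$, Birkhoff's theorem applied to $\mathbf 1_E$ shows that the times $m$ with $T^mx\in E$ have density $\mu(E)>1-\delta$; hence below any large level there are fewer than a $\delta$-fraction of ``bad'' times, so every maximal run of consecutive bad times below level $M$ has length $<\delta M$. Fix $n$ large. If $m\ge cn$ with $c:=\max\{(\log C_1-\theta)/\epsilon,\,0\}$, the crude bound $F_n(T^mx)\le n\log C_1$ already gives the claim. If $m<cn$, tile
\[
[m,m+n)=[m,s_0)\cup[s_0,s_1)\cup\dots\cup[s_{k-1},s_k)\cup[s_k,m+n),
\]
where $s_0$ is the first good time $\ge m$ and $s_1,\dots,s_k$ are chosen greedily so that $T^{s_i}x\in E$ and $K\le s_{i+1}-s_i\le K+\delta(c+2)n$; the density estimate guarantees $s_0-m<\delta(c+2)n$, that the greedy choices are possible, and that $m+n-s_k<K+\delta(c+2)n$. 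Applying \eqref{e-pro} along the tiling,
\[
F_n(T^mx)\le F_{s_0-m}(T^mx)+\sum_{i=1}^{k}F_{s_i-s_{i-1}}(T^{s_{i-1}}x)+F_{m+n-s_k}(T^{s_k}x),
\]
and each summand is $\le(s_i-s_{i-1})\theta$ because $T^{s_{i-1}}x\in E$ and $s_i-s_{i-1}\ge K$, so the sum equals $(s_k-s_0)\theta$, while the first and last terms are at most constant multiples of their lengths, which are $O(\delta n+K)$. Since $s_k-s_0=n-O(\delta n+K)$, first choosing $\delta$ small (depending only on $\epsilon$, $\theta$, $\log C_1$) and then $n$ large gives $F_n(T^mx)\le n\theta+C(\delta n+K)\le n\alpha+(n+m)\epsilon$ when $\alpha\in\R$, and $F_n(T^mx)\le -Nn+(n+m)\epsilon$ when $\alpha=-\infty$. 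Together with the lower bound this proves the proposition.

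The most delicate point is the bookkeeping inside the tiling: one must keep the overhead coming from the two non-aligned end blocks and from the constraint $s_{i+1}-s_i\ge K$ below $\epsilon n$, which is exactly why one restricts to $m<cn$ (so that the window $[m,m+n)$ has length comparable to $n$) and chooses the small parameter $\delta$ \emph{before} fixing $K=K(\delta)$. It is worth stressing that only a.e.\ convergence in Kingman's theorem is used, with no rate --- which is why the argument goes through verbatim in the case $\alpha=-\infty$ and delivers part (ii).
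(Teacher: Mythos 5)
Your proof is correct, and while the lower bound in (i) is obtained exactly as in the paper (Kingman's theorem plus one application of $f_{n+m}(x)\le f_m(x)f_n(T^mx)$), your upper bound argument takes a genuinely different route. The paper invokes \cite[Lemma 2.2]{CFH08} to get $(f_n(y))^k\le C^{2k^2}\prod_{j=0}^{n-k}f_k(T^jy)$ with $k$ chosen so that $\tfrac1k\int\log f_k\,d\mu$ approximates $\alpha$, applies this at $y=T^mx$, and rewrites the resulting sum as a \emph{difference of two Birkhoff sums of $\tfrac1k\log f_k$ both anchored at $x$}; uniformity in $m$ then falls out of Birkhoff's theorem at the single point $x$. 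You instead use Egorov to manufacture a set $E$ on which $\tfrac1kF_k\le\theta$ uniformly for $k\ge K$, control the density of visits of the orbit of $x$ to $E$ by Birkhoff applied to $\mathbf 1_E$, and tile $[m,m+n)$ greedily by blocks of length in $[K,K+O(\delta n)]$ starting at visit times; uniformity in $m$ is recovered by splitting into $m\ge cn$ (disposed of by the trivial bound $F_n\le n\log C_1$, which is precisely where the error term $(n+m)\epsilon$ rather than $n\epsilon$ is needed) and $m<cn$ (so the window sits inside $[0,(c+1)n)$, where the density estimate anchored at $x$ applies). Both arguments are sound, and your parameter ordering ($\delta$ before $K=K(\delta)$ before $n_0(x)$) is handled correctly. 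What each buys: the paper's version is shorter once the cited lemma is granted and needs no tiling bookkeeping; yours is self-contained modulo Birkhoff and Egorov, and it treats the case $\alpha=-\infty$ on exactly the same footing, whereas the paper's ``the proof of (ii) is similar'' quietly requires a truncation of $\log f_k$ to restore integrability before Birkhoff can be applied.
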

\begin{proof}
We only prove (i). The proof of (ii) is similar.

Assume that $\alpha\in \R$. Let $\epsilon>0$ and take $0<\delta<\epsilon/4$. By the Kingman's sub-additive ergodic theorem,
for $\mu$-almost every $x\in X$, there exists $n_0(x)$ such that
$$
|\log f_n(x)-n\alpha|\leq n\delta
$$
for all $n \geq n_0(x)$, and
$$
|\log f_m(x)-m\alpha|\leq (n_0(x)+m)\delta
$$
for all $m\in \N$.
Hence by \eqref{e-pro}, we have for $n\geq n_0(x)$ and $m \in \N$,
\begin{equation}
\label{e-t}
\begin{split}
\log f_n(T^m x)&\geq \log f_{n+m}(x) - \log f_m(x) \\
&\geq (n+m)(\alpha-\delta)-m(\alpha+\delta)-n_0(x)\delta
\\
&\geq n\alpha-2(n+m)\delta\geq n\alpha-(n+m)\epsilon.
\end{split}
\end{equation}

To see the opposite inequality, take $k$ large enough such that $|\beta-\alpha|<\delta$, where
$$
\beta=\frac{1}{k}\int \log f_k \; d\mu.
$$
Since $\{f_n(x)\}_{n=1}^\infty$ is sub-multiplicative, by \cite[Lemma 2.2]{CFH08}, we have for any $n\geq 2k$,
$$
(f_n(x))^k\leq C^{2k^2} \prod_{j=0}^{n-k}f_k(T^j x)
$$
for all $x\in X$,
where $C=\max\{1,\sup_{x\in \Sigma} f_1(x)\}$.
It follows that for $n\geq 2k$ and $m\in \N$,
$$
\log f_n(T^m x) \leq 2k \log C +\sum_{i=0}^{n-k+m}\frac{1}{k}\log f_k(T^i x)-\sum_{i=0}^{m-1}\frac{1}{k}\log f_k(T^i x).
$$
Applying the Birkhoff ergodic theorem to the function $\frac{1}{k}\log f_k$, and combining it with the above inequality, we see that  for $\mu$-almost every $x\in X$, there exists  an integer $\tilde{n}_0(x) \geq 2k\delta^{-1}\log C$ such that
\begin{align*}
\log f_n(T^m x) &\leq n\delta + (n-k+m)(\beta+\delta) - m(\beta-\delta)+\tilde{n}_0(x)\delta \\
 &\leq n\beta+2(n+m)\delta+\tilde{n}_0(x)\delta\leq n\alpha+4(n+m)\delta\\
 &\leq n \alpha +(n+m)\epsilon
\end{align*}
for all $n\geq \tilde{n}_0(x)$ and $m\in \N$. This together with (\ref{e-t}) yields (\ref{e-2.2}).
\end{proof}

As a direct corollary of Proposition \ref{lem-2.1}, we have the following.

\begin{cor}
\label{cor-2.1} Under the assumptions of Proposition \ref{lem-2.1}, for any
$\epsilon, N>0$ and for $\mu$-almost every $x\in X$, there is
$C(x)>0$ such that
$$|f_n(T^m x)|\leq C(x) \exp(n \max\{\alpha, -N\})\exp ((n+m)\epsilon)$$
for all $n,m\in \N$.
\end{cor}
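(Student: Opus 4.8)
The plan is to deduce the corollary directly from Proposition \ref{lem-2.1} by treating the tail range $n\geq n_0(x)$ and the finite initial range $1\leq n<n_0(x)$ separately, and by merging the cases $\alpha\neq-\infty$ and $\alpha=-\infty$ into a single estimate. Since each $f_n$ is non-negative, $|f_n(T^mx)|=f_n(T^mx)$, so we may drop the absolute values. First I would record the crude bound that follows from sub-multiplicativity alone: iterating \eqref{e-pro} (with $n$ replaced by $1$) gives $f_n(x)\leq\prod_{j=0}^{n-1}f_1(T^jx)\leq C_1^n$ for all $x\in X$ and $n\in\N$, where $C_1:=\max\{1,\sup_{x\in X}f_1(x)\}<\infty$; in particular $f_n(T^mx)\leq C_1^n$ for all $n,m\in\N$.

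For the tail, fix $\epsilon,N>0$. If $\alpha\neq-\infty$, part (i) of Proposition \ref{lem-2.1} yields, for $\mu$-almost every $x$, an integer $n_0(x)$ with $\log f_n(T^mx)\leq n\alpha+(n+m)\epsilon$ whenever $n\geq n_0(x)$ and $m\in\N$; if $\alpha=-\infty$, part (ii) applied with this $N$ and $\epsilon$ gives $\log f_n(T^mx)\leq -Nn+(n+m)\epsilon$ in the same range. Since $\alpha\leq\max\{\alpha,-N\}$ in every case (using the convention $\exp(-\infty)=0$ when $\alpha=-\infty$), both possibilities yield
\[
f_n(T^mx)\leq\exp\bigl(n\max\{\alpha,-N\}\bigr)\,\exp\bigl((n+m)\epsilon\bigr),\qquad n\geq n_0(x),\ m\in\N.
\]

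It then remains to absorb the finitely many terms with $1\leq n<n_0(x)$. For each such $n$ and every $m\in\N$ we have $f_n(T^mx)\leq C_1^n$ and $\exp(-(n+m)\epsilon)\leq 1$, so the displayed inequality holds provided $C(x)\geq C_1^n\exp(-n\max\{\alpha,-N\})$; taking
\[
C(x):=\max\Bigl\{1,\ \max_{1\leq n<n_0(x)}C_1^n\exp\bigl(-n\max\{\alpha,-N\}\bigr)\Bigr\}<\infty
\]
then gives the claim for all $n,m\in\N$ (the tail case also being covered since $C(x)\geq 1$). There is no essential obstacle here: Proposition \ref{lem-2.1} does all the real work, and the only care needed is the bookkeeping of the initial segment via the sub-multiplicative bound $f_n\leq C_1^n$, together with the observation that $\exp(n\alpha)\leq\exp(n\max\{\alpha,-N\})$ regardless of the sign or finiteness of $\alpha$, which is precisely what allows one estimate to cover both cases.
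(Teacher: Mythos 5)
Your proof is correct and follows exactly the route the paper intends: the corollary is stated there as a direct consequence of Proposition \ref{lem-2.1}, and your derivation (tail via parts (i)/(ii) of the proposition, initial segment $n<n_0(x)$ absorbed into $C(x)$ using the uniform sub-multiplicative bound $f_n\leq C_1^n$) is precisely the omitted bookkeeping. The only cosmetic remark is that the parenthetical about the convention $\exp(-\infty)=0$ is unnecessary, since $\max\{\alpha,-N\}=-N$ is finite whenever $\alpha=-\infty$.
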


\medskip
\begin{proof}[Proof of Theorem \ref{thm-1.2}] We only need to prove part (i), since parts (ii) and (iii)
follow immediately from (i), the variational principle (\ref{e-1.3}),
and Proposition \ref{pro-1.1}.

Fix an ergodic measure $\mu\in \M_\sigma(\Sigma)$. The direction
$M_*(\mu)\geq \max\{A_{*}^{(j)}(\mu):\; j\in \Lambda\}$ follows from
the fact that
$$
\|A_{i_1}^{(j,j)}\cdots A_{i_n}^{(j,j)}\|\leq \|T^{-1}M_{i_1}\cdots
M_{i_n}T\|\leq \|T^{-1}\|\|T\|\|M_{i_1}\cdots M_{i_n}\|
$$
for any $j\in \Lambda$ and $i_1,\ldots,i_n\in \{1,\ldots,\ell\}$. We
only need to prove the other direction.

By Furstenberg-Kesten's theorem \cite{FuKe60} on random matrices, or  Kingman's sub-additive ergodic theorem (see e.g. \cite{Wal-book}), we have for $\mu$-almost every $x=(x_i)_{i=1}^\infty\in \Sigma$,
 \begin{equation}
 \label{e-2.n}
 \lim_{n\to \infty}\frac{1}{n}\log \|M_{x_1\cdots x_n}\|=M_{*}(\mu).
 \end{equation}
For any $i \in \{ 1,\ldots,t \}$, define a sequence
$\{f_n^{(j)}\}_{n=1}^\infty$ of non-negative functions on $\Sigma$
by setting
$$
f_n^{(j)}(x)=\|A_{x_1}^{(j,j)}\cdots A_{x_n}^{(j,j)}\|
$$
for all $x=(x_i)_{i=1}^\infty\in \Sigma$.
Let $\epsilon,N>0$. Apply Corollary \ref{cor-2.1} for $\{f_n^{(j)}\}_{n=1}^\infty$ to obtain that,  for
$\mu$-almost every $x=(x_i)_{i=1}^\infty\in \Sigma$,  there exists
$C(x)\geq 1$ such that
 \begin{equation}
 \label{e-2.s}
 \begin{split}
 \|A^{(j,j)}_{x_{m+1}x_{m+2}\cdots x_{m+n}}\|&\leq C(x)\exp(n\max\{A^{(j)}_{*}(\mu),-N\})\exp((n+m)\epsilon)\\
 &\leq C(x) \exp(n\max\{W,-N\})\exp((n+m)\epsilon),
 \end{split}
  \end{equation}
 for all $j \in \{ 1,\ldots,t \}$ and $n,m\in \N$, where
 $$W=\max\{A^{(j)}_{*}(\mu):\; j \in \Lambda\}.$$
For the rest of the proof, we take a point $x=(x_i)_{i=1}^\infty\in \Sigma$
such that both (\ref{e-2.n}) and (\ref{e-2.s}) hold for $x$.

Fix $n \in \N$. According to \eqref{e-2.7_1}, $T^{-1}M_{x_1\cdots x_n}T$  is a partitioned matrix of the form $(B^{(j,k)})_{1\leq j,k\leq t}$, where each $B^{(j,k)}$ is a $d_j\times d_k$ matrix given by
 \begin{equation}
 \label{e-2.7}
 B^{(j,k)}=\sum_{j\leq y_1\leq y_2\leq \cdots\leq y_{n-1}\leq k} A_{x_1}^{(j,y_1)}A_{x_2}^{(y_1,y_2)}\cdots
 A_{x_n}^{(y_{n-1},k)}.
 \end{equation}
 It is easy to check that the number of words $y_1y_2\cdots y_{n-1}\in \{1,\ldots,t\}^{n-1}$, satisfying the restriction
 $j\leq y_1\leq y_2\leq \cdots\leq y_{n-1}\leq k$, is bounded above by
 $h(n)=(2n)^t$. Furthermore, each such a word $jy_1y_2\cdots y_{n-1}k$
 can be written as $a_1^{n_1}a_2^{n_2} \cdots a_s^{n_s}$,
 where  $s \in \{ 1,\ldots,t \}$, $j = a_1<\cdots<a_s = k$,
 and $n_1,\ldots,n_s\in \N$ with
 $n_1+\cdots+n_s=n+1$. Hence
 \begin{equation}
 \label{e-2.8'}
 A_{x_1}^{(j,y_1)}A_{x_2}^{(y_1,y_2)}\cdots
 A_{x_n}^{(y_{n-1},k)}=W_1 A_{x_{n_1}}^{(a_1,a_2)} W_2 A_{x_{n_1+n_2}}^{(a_2,a_3)}\cdots
 W_{s-1} A_{x_{n_1+n_2+\cdots+n_{s-1}}}^{(a_{s-1},a_s)}W_s,
 \end{equation}
where
\begin{align*}
  W_i = \begin{cases}
          {\bf I}_{d_{a_i} \times d_{a_i}} &\text{if } n_i=1, \\
          A^{(a_i,a_i)}_{x_{n_0+\cdots+n_{i-1}+1}} \cdots A^{(a_i,a_i)}_{x_{n_0+\cdots+n_i-1}} &\text{if } n_i>1
        \end{cases}
\end{align*}
for all $i \in \{ 1,\ldots,s \}$. Here ${\bf I}_{d \times d}$ is the $d \times d$ identity matrix and $n_0=0$.
Observe that (\ref{e-2.s}) gives
$$\|W_i\|\leq
C(x)\exp\left((n_i-1)\max\{W,-N\}\right)\exp\left((n_1+\cdots+n_i-1)\epsilon\right)
$$
for all $i \in \{ 1,\ldots,s \}$.
Hence, by (\ref{e-2.8'}), we have
 \begin{equation}
 \label{e-2.8}
 \begin{split}
\| A_{x_1}^{(j,y_1)} A_{x_2}^{(y_1,y_2)}\cdots
 &A_{x_n}^{(y_{n-1},k)}\| \leq  L^{s-1} \prod_{i=1}^s\|W_i\|\\
 &\leq L^{s-1} C(x)^s  \exp((n+1-s)\max\{W,-N\})\exp(ns\epsilon)\\
 &\leq D L^{t} C(x)^t  \exp(n\max\{W,-N\})\exp(nt\epsilon),
 \end{split}
 \end{equation}
 where
\begin{align*}
  L&=1+\max\{\|A^{(j_1,j_2)}_i\|:\; j_1,j_2 \in \{ 1,\ldots,t \} \text{ and } i \in \{ 1,\ldots,\ell\} \}, \\
  D&=\max\{1,\exp((t+1)\max\{W,-N\})\}.
\end{align*}
Therefore, by (\ref{e-2.7})--(\ref{e-2.8}),  we have the estimate
 \begin{align*}
 \|T^{-1}M_{x_1\cdots x_n}T\|&\leq
t^2\max\{ \|B^{(j,k)}\| : j,k \in \{ 1,\ldots,t \} \} \\
&\leq
 t^2 h(n) D L^{t} C(x)^t  \exp(n(\max\{W,-N\}))\exp(nt\epsilon)
 \end{align*}
for all $n\in \N$. Combining this estimate and (\ref{e-2.n}) yields
 $$
 M_*(\mu)=\lim_{n\to \infty}\frac{1}{n}\log \|T^{-1}M_{x_1\cdots x_n}T\|\leq \max\{W,-N\}+t\epsilon.
 $$
 Letting $N\to \infty$ and $\epsilon\to 0$, we get $$M_*(\mu)\leq W=\max\{A^{(j)}_{*}(\mu):\; j\in\Lambda\},$$
 which finishes the proof of part (i) of Theorem \ref{thm-1.2}.
 \end{proof}

\section{Extensions and remarks}
For an  invertible matrix $M\in \R^{d\times d}$, following \cite{Fal88}, we define the singular value
function of $M$ as
$$\phi^q(M) = \alpha_1(M)\cdots \alpha_k(M)\alpha_{k+1}(M)^{q-k},$$
where $ 0\leq q < d$, $k$ is the integral part of $q$, and $\alpha_i(M)$ is the $i$-th largest singular value of $M$. For $q>d$, we put $\phi^q(M) =
|\det(M)|^{q/d}$.
It is known (see \cite[Lemma 2.1]{Fal88}) that $\phi^q$ is sub-multiplicative in the sense that
$$\phi^q(M_1M_2)\leq \phi^q(M_1)\phi^q(M_2)$$ for any two invertible matrices $M_1,M_2 \in \R^{d\times d}$.   For a given family of
invertible matrices $\{M_i\}_{i=1}^\ell\subset \R^{d\times d}$, similar to (\ref{e-1.1}), we define
\begin{equation} \label{e-affine_pressure}
P^\phi(q)=\lim_{n\to \infty} \frac{1}{n}\log \sum_{J\in \Sigma_n}\phi^q(M_J).
\end{equation}
For $\mu\in \M_\sigma(\Sigma)$,
we define
\begin{equation} \label{e-energy}
\phi^q_*(\mu)=\lim_{n\to \infty}\frac{1}{n}\sum_{J\in \Sigma_n}\mu(J) \log \phi^q(M_J).
\end{equation}
Then by \cite[Theorem 2.6]{Kae04}, or more generally by \cite[Theorem 1.1]{CFH08},  we have the following variational principle
$$
P^\phi(q)=\max\{\phi^q_*(\mu)+h(\mu):\; \mu\in \M_\sigma(\Sigma)\}.
$$
Similarly we can study the structure of the equilibrium states of $P^\phi(q)$. It is easy to see that Theorem \ref{thm-1.2} remains true for $P^\phi(q)$ when $0\leq q\leq 1$ or $q\geq d-1$. Observe also that it is true when $q$ is an integer: if $M^{\wedge q}$ is the $q$-th exterior product of $M \in \R^{d \times d}$ (i.e.\ the $\binom{d}{q} \times \binom{d}{q}$ matrix whose entries are the $q \times q$ minors of $M$), then $$\alpha_1(M^{\wedge q})=\alpha_1(M)\cdots \alpha_{q}(M)=\phi^q(M).$$ This gives a partial answer to \cite[Question 6.3]{KV09}.

\begin{que}
  When using \eqref{e-affine_pressure} and \eqref{e-energy} instead of \eqref{e-1.1} and \eqref{e-1.2}, does something like Theorem \ref{thm-1.2} hold for $q \in [1,d-1] \setminus \N$?
\end{que}

We remark that some assumption was given in \cite{FaSl09} so that an analogue of \eqref{e-2.1} (where $\|\cdot\|$ is replaced by $\phi^q(\cdot)$) holds; and for such case, an analogue of Proposition \ref{pro-1.1} holds for $P^\phi$ (cf.\ \cite[Theorem 5.5]{Fen09b}).




\bigskip

 \noindent {\bf Acknowledgements.} Feng was partially supported by  the RGC grant in the Hong Kong Special Administrative
Region, China.
K\"{a}enm\"{a}ki acknowledges the support of the Academy of
Finland (project \#114821). He also thanks the CUHK, where this research was started, for warm hospitality. The authors are grateful to Guivarc'h and Le Page for pointing out the reference \cite{GuLe04}.

\end{document}